\documentclass[11pt,letterpaper]{amsart}

\setlength{\pdfpagewidth}{\paperwidth}
\setlength{\pdfpageheight}{\paperheight} 

\usepackage{amssymb}
\usepackage{enumerate}

\newtheorem{theorem}{Theorem}
\newtheorem{lemma}[theorem]{Lemma}

\newcommand{\abs}[1]{\lvert#1\rvert}

\allowdisplaybreaks[3]

\begin{document}

\title{Barker sequences of odd length}

\author{Kai-Uwe Schmidt}
\address{Faculty of Mathematics, Otto-von-Guericke University, Universit\"atsplatz~2, 39106 Magdeburg, Germany}
\email[K.-U. Schmidt]{kaiuwe.schmidt@ovgu.de}

\author{J\"urgen Willms}
\address{Institut f\"ur Computer Science, Vision and Computational Intelligence, Fachhochschule S\"udwestfalen, 59872 Meschede, Germany}
\email[J. Willms]{willms.juergen@fh-swf.de}

\date{23 January 2015}

\subjclass[2010]{11B83, 05B10, 94A55}

\begin{abstract}
A Barker sequence is a binary sequence for which all nontrivial aperiodic autocorrelations are at most $1$ in magnitude. An old conjecture due to Turyn asserts that there is no Barker sequence of length greater than $13$. In 1961, Turyn and Storer gave an elementary, though somewhat complicated, proof that this conjecture holds for odd lengths. We give a new and simpler proof of this result.
\end{abstract}

\maketitle


\section{Introduction}

Consider a binary sequence $A$ of length $n>1$, namely an element of $\{-1,1\}^n$. We write $A(k)$ for the $k$-th entry in $A$. Define the \emph{aperiodic autocorrelation} at shift~$u$ (where $0\le u<n$) of~$A$ to be
\[
C(u)=\sum_{k=1}^{n-u}A(k)A(k+u).
\] 
Notice that $C(0)=n$. All other values $C(u)$ are called the \emph{nontrivial} aperiodic autocorrelations. There is sustained interest in binary sequences for which all of the nontrivial aperiodic autocorrelations are small (see~\cite{Jed2008} for a good survey). It is known~\cite{Sch2012} that, for each $n>1$, there exists a binary sequence of length $n$ such that all nontrivial aperiodic autocorrelations are at most $\sqrt{2n\log(2n)}$ in magnitude.
\par
On the other hand, it is not known whether there exist infinitely many Barker sequences, namely binary sequences with the ideal property that the nontrivial aperiodic autocorrelations are at most $1$ in magnitude. Notice that for fixed $a,b\in\{0,1\}$, the transformation $A(k)\mapsto A(k)(-1)^{a+bk}$ preserves the Barker property. We can therefore assume  without loss of generality that a Barker sequence $A$ satisfies $A(1)=A(2)=1$. The only known Barker sequences with this property are (writing $+$ for $1$ and~$-$~for~$-1$)
\begin{alignat*}{3}
A_3&=[++-],    &\quad A_2&=[+\,+],\\[.3ex]
A_5&=[+++-+],  &\quad A_4&=[+++\,-],\\[.3ex]
A_7&=[+++--+-],&\quad A'_4&=[++-\,+],\\[.3ex]
A_{11}&=[+++---+--+-],\\[.3ex]
A_{13}&=[+++++--++-+-+].
\end{alignat*}
Indeed, it has been conjectured since at least 1960~\cite{Tur1960} that there is no Barker sequence of length greater than $13$. This conjecture is known to be true for sequences of odd length, as proven by Turyn and Storer~\cite{TurSto1961}.
\begin{theorem}[{Turyn and Storer~\cite{TurSto1961}}]
\label{thm:main}
If there exists a Barker sequence of odd length $n$, then $n\in\{3,5,7,11,13\}$.
\end{theorem}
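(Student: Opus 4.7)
My plan is to exploit the parity of the aperiodic autocorrelations for an odd-length Barker sequence, reduce via the periodic autocorrelation to a highly rigid structure, and then combine the resulting Diophantine constraints with a secondary auxiliary sequence to pin $n$ to the five known values. First, $C(u)$ is a sum of $n-u$ terms in $\{-1,+1\}$, so $C(u)\equiv n-u\pmod{2}$; for odd $n$, the Barker bound $|C(u)|\le 1$ forces $C(u)=0$ when $u$ is odd and $C(u)\in\{-1,+1\}$ when $u$ is a positive even integer.

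Next I pass to the periodic autocorrelation $P(u):=C(u)+C(n-u)$ for $0<u<n$. Since $n$ is odd, exactly one of $u, n-u$ is even, so $P(u)\in\{-1,+1\}$. Writing $A=2\chi_S-\mathbf{1}$ with $S:=\{k:A(k)=+1\}$, a direct count gives $P(u)=4\,|S\cap(S-u)|+n-4|S|$, so $P(u)\equiv n\pmod{4}$; hence every $P(u)$ equals the common value $\epsilon:=(-1)^{(n-1)/2}$. Unwinding yields the rigidity $C(u)=\epsilon$ for \emph{every} even $u\in\{2,4,\ldots,n-1\}$, and substituting into $\bigl(\sum_k A(k)\bigr)^2=n+2\sum_{u>0}C(u)$ gives $s^2=2n-1$ when $n\equiv 1\pmod 4$ and $s^2=1$ when $n\equiv 3\pmod 4$, where $s:=\sum_k A(k)$. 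Evaluating the generating polynomial $f(x):=\sum_k A(k)x^{k-1}$ at $x=i$ via $f(x)f(x^{-1})=n+\sum_{u\ne 0}C(|u|)x^u$ further gives $|f(i)|^2=n$ in the first case and $|f(i)|^2=n+2$ in the second; since $\mathrm{Re}\,f(i)$ and $\mathrm{Im}\,f(i)$ are integer alternating sums on odd- and even-indexed entries of $A$, these become constraining sum-of-two-squares equations with prescribed parities.

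The principal obstacle is that these equations alone do not cap $n$: each admits solutions for infinitely many $n$ (for instance, $n=25,41,\ldots$ in the first case, and every $n\equiv 3\pmod 4$ in the second). The substantive combinatorial step is therefore to extract a second independent constraint. I would introduce the auxiliary $\pm 1$ sequence $B(k):=A(k)A(k+2)$ on $n-2$ entries, express its autocorrelations $C_B(u)$ as fourfold products of entries of $A$ (which, under the rigidity of the previous step, reduce to linear combinations of $C$-values and hence to explicit affine functions of $n$), and re-run the above periodic/Fourier analysis on $B$. The expectation is that the two Diophantine systems together force $n\in\{3,5,7,11,13\}$, and I anticipate this last reduction to be where the real difficulty lies.
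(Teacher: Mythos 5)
Your first two paragraphs are correct but classical, and they reprove facts the paper already contains: the vanishing of $C(u)$ at odd shifts, the congruence $C(u)+C(n-u)\equiv n\pmod 4$ (this is exactly~\eqref{eqn:Cu}), the resulting rigidity $C(u)=(-1)^{(n-1)/2}$ for all positive even $u$, and the consequences $s^2=2n-1$ or $s^2=1$ and $\abs{f(i)}^2=n$ or $n+2$. As you yourself concede, these moment conditions are satisfied for infinitely many $n$, so the entire burden of the theorem falls on your third paragraph, and that step contains a genuine gap. The parenthetical claim that the autocorrelations $C_B(u)=\sum_k A(k)A(k+2)A(k+u)A(k+u+2)$ ``reduce to linear combinations of $C$-values'' is false: these are fourth-order correlations, and the second-order autocorrelation function of a $\pm 1$ sequence does not determine its fourth-order correlations (inequivalent binary sequences with identical autocorrelation functions exist, and they generally differ at fourth order). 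What the exact values $C(u)$ \emph{do} determine is one sign per shift, namely $\prod_{k=1}^{n-u}A(k)A(k+u)=(-1)^{(n-u-C(u))/2}$, and extracting pointwise structure from these signs is precisely the content of Lemma~\ref{lem:TS}: the skew-symmetry $A(k)A(n-k+1)=(-1)^{(n+1)/2+k}$ and the doubling identity $A(k)A(k+1)=A(2k)A(2k+1)$. Without some such structural input your plan cannot evaluate $C_B$ at all; and even granting the evaluation, you offer no argument --- only an ``expectation'' --- that the enlarged Diophantine system eliminates all large $n$. So what you have is a correct warm-up plus an unexecuted and, as stated, unexecutable key step; it is not a proof.

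The contrast with the paper is instructive: no known proof of this theorem proceeds by moment conditions alone. The paper converts the sign identities of Lemma~\ref{lem:TS} into statements about the \emph{run structure} of $A$: with run boundaries $0=s_0<s_1<\dots<s_m=n$ and $e$ the least index with $s_1\nmid s_e$, Lemma~\ref{lem:ub_n} shows $s_1$ and $s_e$ are odd and $n\ge 2s_e-3$ (the end of $A$ is forced by skew-symmetry), while the key Lemma~\ref{lem:lb_n} evaluates the difference $C(n-v+1)-C(n-v-1)$ at $v=s_1+s_e$ by a telescoping computation and finds it has magnitude at least $4$ whenever $n\ge v+3$, contradicting the Barker property; the squeeze $2s_e-3\le n\le s_1+s_e+1$ gives $s_e\le s_1+4$, after which a short case analysis yields $n\in\{3,5,7,11,13\}$. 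If you wish to salvage your approach, the missing ingredient is exactly a mechanism of this kind for passing from correlation \emph{values} to pointwise identities among entries --- your fourth-order statistics are not computable without it, and with it one may as well follow the run-structure route directly.
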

\par
Fairly deep methods have been devised to attack the case that the length is even, including the character-theoretic approach by Turyn~\cite{Tur1965} and the field-descent method by B. Schmidt~\cite{Sch1999}, but the problem remains open. We refer to~\cite{Jed2008} for a brief survey and to~\cite{BorMos2013} for the latest results on this problem.
\par
The proof of Theorem~\ref{thm:main} due to Turyn and Storer~\cite{TurSto1961} is elementary, but involves an arduous inductive argument.\footnote{We note that~\cite{Wil2014} gives counterexamples to~\cite[Theorem 1 (iv)]{TurSto1961}. One can show however that, in~\cite[Theorem 1]{TurSto1961}, the statements (ii) and (iii) imply~(iv) with the corrected range $k\le t/p-1/2$, which is consistent with~\cite{Wil2014} and sufficient for the induction in the proof.} Borwein and Erd{\'e}lyi~\cite{BorErd2013} gave a proof using a different induction, but its overall structure is similar to that of Turyn and Storer~\cite{TurSto1961}. In this paper, we offer a simpler proof of Theorem~\ref{thm:main}.
\par
We briefly explain how our proof differs from the previous ones. For a putative Barker sequence $A$ of length $n>3$ assume that $A(1)=A(2)=1$ and let $p+1$ be the position of the first occurrence of $-1$ in $A$. It is not hard to show that $p\ge 3$. The crucial (and lengthy) step in the proofs of~\cite{TurSto1961} and~\cite{BorErd2013} is to establish that $A$ has the following block structure
\[
A(jp+1)=A(jp+2)=\dots=A(jp+r)
\]
for all $j$ and $r$ satisfying $1\le jp+r\le n-p-2$ and $1\le r\le p$. Once this is established, it is easy to conclude that $A$ cannot have many such blocks and must therefore be short.
\par
In contrast, we do not establish such a block structure explicitly. We consider the \emph{runs} of $A$, which are subsequences of maximal length consisting of equal entries (see~\cite{Wil2013} for connections between runs and autocorrelations). We assume that $A$ starts with $e-1$ runs whose lengths are divisible by $p$ and that the length of the $e$-th run is not divisible by $p$. Let~$q$ be the sum of the lengths of the first~$e$ runs. It is not hard to show that $n\ge 2q-3$. The key result is that, if $n>p+q+1$, then
\[
\abs{C(n-p-q+1)-C(n-p-q-1)}\ge 4,
\]
which contradicts the defining property of a Barker sequence. Therefore,~we have $2q-3\le n\le p+q+1$, from which we can easily deduce Theorem~\ref{thm:main}.
\par
We shall make use of the following results due to Turyn and Storer~\cite{TurSto1961}. In order to make this note self-contained, we include their short proofs. 
\begin{lemma}[{Turyn and Storer~\cite{TurSto1961}}]
\label{lem:TS}
Suppose that $A$ is a Barker sequence of odd length~$n$. Then the following statements hold:
\begin{enumerate}[(i)]
\item\label{itm:ss} $A(k)A(n-k+1)=(-1)^{(n+1)/2+k}$ for each $k$ satisfying $1\le k\le n$.
\item\label{itm:doubling} $A(k)A(k+1)=A(2k)A(2k+1)$ for each $k$ satisfying $1\le k\le \frac{n-3}{2}$.
\end{enumerate}
\end{lemma}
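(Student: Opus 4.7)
My plan is to first establish the parity fact $C(u)=0$ for every odd $u$, then prove (i) by induction on $k$, and finally derive (ii) by a further induction using (i). For the parity step, observe that $C(u)$ is a sum of $n-u$ entries from $\{-1,1\}$, so $C(u)\equiv n-u\pmod{2}$; when $n$ is odd and $u$ is odd, $C(u)$ is an even integer of absolute value at most $1$, hence $C(u)=0$, while for even $u\in(0,n)$ the same argument gives $C(u)=\pm 1$.

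\emph{Proof of (i).} Set $m=(n+1)/2$ and $\epsilon_k:=A(k)A(n-k+1)$, so that $\epsilon_{n-k+1}=\epsilon_k$ and $\epsilon_m=1$; the statement is equivalent to $\epsilon_{k+1}=-\epsilon_k$. The base case follows from $C(n-2)=A(1)A(n-1)+A(2)A(n)=0$ after multiplying through by $A(1)A(2)$, which yields $\epsilon_1+\epsilon_2=0$. For the inductive step, expand $C(n-2k)=\sum_{j=1}^{2k}A(j)A(j+n-2k)=0$ and pair the summands indexed by $j$ and $2k-j+1$. After substituting $A(n-\ell+1)=\epsilon_\ell A(\ell)$, each pair collapses to a nonzero multiple of $\epsilon_j+\epsilon_{2k-j+1}$, so the zero equation rewrites as a linear identity in $\epsilon_1,\dots,\epsilon_{2k}$; applying the inductive hypothesis leaves a single equation in the two new unknowns, forcing alternation.

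\emph{Proof of (ii).} Substitute $A(n-\ell+1)=(-1)^{m+\ell}A(\ell)$, provided by (i), into the even-shift constraint $|C(n-2k-1)|=1$. Pairing summands at $\ell$ and $2k+2-\ell$ and extracting the central self-paired term $A(k+1)^2=1$ reduces this to $\sum_{\ell=1}^{k}(-1)^\ell A(\ell)A(2k+2-\ell)=\sigma$ for a value $\sigma\in\{-1,0,1\}$ pinned down by parity. Proceeding by induction on $k$, the earlier identities (ii)$(1),\dots,$(ii)$(k-1)$, combined with the normalization $A(1)=A(2)=1$, cut down the possible values of the products $A(\ell)A(2k+2-\ell)$ with $\ell<k$ enough that a short case analysis on the remaining constraint forces $A(k)A(k+1)=A(2k)A(2k+1)$.

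The main obstacle is the inductive step of (i): the equation $C(n-2k)=0$ on its own admits non-alternating $\epsilon$-patterns, so the proof only closes because the inductive hypothesis annihilates all but two terms in the pair-sum expansion. Once (i) is secured, (ii) amounts to disciplined bookkeeping with the even-shift autocorrelations and the previously established relations.
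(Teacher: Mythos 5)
Your proof of part (ii) is essentially workable, but the inductive step in your proof of part (i) does not close, and you have flagged the exact spot where it fails without actually resolving it. After pairing and applying the inductive hypothesis (alternation among $\epsilon_1,\dots,\epsilon_{2k-2}$), the equation $C(n-2k)=0$ collapses to the single relation
\[
A(1)A(2k)\bigl(\epsilon_1+\epsilon_{2k}\bigr)+A(2)A(2k-1)\bigl(\epsilon_2+\epsilon_{2k-1}\bigr)=0,
\]
one equation in the two new unknowns $\epsilon_{2k-1},\epsilon_{2k}$. Besides the alternating solution, the assignment $\epsilon_{2k}=\epsilon_1$, $\epsilon_{2k-1}=\epsilon_2$ also satisfies it whenever $A(1)A(2k)=A(2)A(2k-1)$, and nothing in your toolkit excludes that coincidence. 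It genuinely occurs: in $A_{13}$ at $k=2$ we have $A(1)A(4)=A(2)A(3)=1$, so $C(n-4)=0$ is satisfied both by the true alternating choice of $(\epsilon_3,\epsilon_4)$ and by the spurious one. The root cause is that you extract only mod-$2$ information from the correlations: $C(u)=0$ for odd $u$, and $C(u)=\pm1$ for even $u$ with \emph{unknown} sign. The paper's proof rests on a strictly stronger fact: applying $xy\equiv x-y+1\pmod 4$ for $x,y\in\{-1,1\}$ to $C(u)+C(n-u)=\sum_{k=1}^n A(k)A(k+u)$ gives $C(u)\equiv n\pmod 4$ for even $u$, so every even-shift correlation equals $(-1)^{(n-1)/2}$ exactly. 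With this, the paper proves (i) not by a linear induction at all but multiplicatively: counting signs gives $\prod_{k=1}^{n-u}A(k)A(k+u)=(-1)^{(n-u-C(u))/2}$, and multiplying this identity for the consecutive shifts $u$ and $u+1$ yields (i) in one stroke, immune to the two-solutions ambiguity that defeats your step. Within your mod-$2$ framework the gap cannot be repaired, since the surviving equation simply underdetermines the new $\epsilon$'s.

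Your reduction for (ii) is correct, and your remark that $\sigma$ is ``pinned down by parity'' is right for a reason worth making explicit: $\sigma$ is a sum of $k$ signs, so $\sigma\equiv k\pmod 2$, while $\abs{C(n-2k-1)}=1$ restricts $\sigma$ to $\{0,(-1)^k\}$; these two facts pin $\sigma$ uniquely (to $0$ for even $k$, to $-1$ for odd $k$) \emph{without} knowing the sign of the correlation, which neatly sidesteps the mod-$4$ input the paper uses at that point. However, your closing step (``induction on $k$ plus case analysis,'' using the normalization $A(1)=A(2)=1$, which the lemma does not assume and whose compatibility with (i) and (ii) you would need to check) is both vaguer than necessary and avoidable: since the sum of $k$ signs determines how many of them are $-1$, the pinned $\sigma$ immediately determines the product $\prod_{\ell=1}^{k}A(\ell)A(2k+2-\ell)$, which evaluates to $1$, i.e.\ $\prod_{\ell=1}^{2k+1}A(\ell)=A(k+1)$; dividing consecutive instances of this identity gives (ii) directly, with no induction, normalization, or case analysis — this is exactly the paper's counting argument. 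So the genuine defect is confined to (i), but it is fatal as written, because (ii) as you derive it depends on (i).
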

\begin{proof}
First note that, if $u$ is odd, then $C(u)$ is a sum of an even number of $1$ or $-1$, so $C(u)=0$. The next step is to observe that, for $0<u<n$,
\begin{equation}
C(u)+C(n-u)=\sum_{k=1}^nA(k)A(k+u),   \label{eqn:periodic}
\end{equation}
where the second index is reduced modulo $n$ if necessary. Use $xy\equiv x-y+1\pmod 4$ for $x,y\in\{-1,1\}$ to conclude that~\eqref{eqn:periodic} is congruent to $n$ modulo $4$. Therefore, since exactly one of $u$ and $n-u$ is odd, we find that
\begin{equation}
C(u)\equiv \begin{cases}
0\pmod 4 & \text{for odd $u$}\\
n\pmod 4 & \text{for even $u$}.
\end{cases}   \label{eqn:Cu}
\end{equation}
Now count the number of $1$ and $-1$ in the sum $C(u)$ to obtain, for $0\le u<n$,
\[
\prod_{k=1}^{n-u}A(k)A(k+u)=(-1)^{(n-u-C(u))/2}.
\]
Multiply two successive equations of this form and use~\eqref{eqn:Cu} to prove (\ref{itm:ss}).
\par
To prove (\ref{itm:doubling}), use (\ref{itm:ss}) to obtain, for $1\le u\le \frac{n-1}{2}$,
\begin{align*}
C(n-2u+1)&=\sum_{k=1}^{2u-1}A(k)A(2u-k)(-1)^{\frac{n+1}{2}+k}\\
&=A(u)^2(-1)^{\frac{n+1}{2}+u}+2\sum_{k=1}^{u-1}A(k)A(2u-k)(-1)^{\frac{n+1}{2}+k}.
\end{align*}
By~\eqref{eqn:Cu}, the left-hand side equals $(-1)^{(n-1)/2}$, so that
\[
-\frac{1+(-1)^u}{2}=\sum_{k=1}^{u-1}A(k)A(2u-k)(-1)^k.
\]
Count the number of $1$ and $-1$ in the sum to find that
\[
\prod_{k=1}^{u-1}A(k)A(2u-k)=1
\] 
or equivalently
\[
\prod_{k=1}^{2u-1}A(k)=A(u).
\] 
Multiplying two successive equations of this form proves (\ref{itm:doubling}).
\end{proof}


\section{Proof of Theorem~\ref{thm:main}}

Suppose that $A$ is a Barker sequence of odd length $n=2m-1$. Since $C(1)$ is a sum of an even number of $1$ or $-1$, we have $C(1)=0$. This implies that $A$ has exactly $m$ runs. Accordingly, we associate with $A$ the unique numbers $s_0,s_1,\dots,s_m$ satisfying
\[
0=s_0<s_1<\dots<s_{m-1}<s_m=n
\]
and
\begin{equation}
A(s_j+1)=A(s_j+2)=\cdots=A(s_{j+1})=(-1)^jA(1)   \label{eqn:def_s}
\end{equation}
for all $j\in\{0,1,\dots,m-1\}$. Note that $s_1>1$ implies $s_m=s_{m-1}+1$ by Lemma~\ref{lem:TS}~(\ref{itm:ss}), so that~$s_1$ cannot divide all of the numbers $s_1,\dots,s_m$. Accordingly, for $s_1>1$, we define $e$ to be the smallest $j$ such that $s_1\nmid s_j$.
\par
We shall need two lemmas.
\begin{lemma}
\label{lem:ub_n}
Suppose that $A$ is a Barker sequence of odd length $n>5$ and that $s_1>1$. Then  $s_1$ and $s_e$ are odd and $n\ge 2s_e-3$.
\end{lemma}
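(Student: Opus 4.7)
Write $p := s_1$ and $q := s_e$; the proof uses both parts of Lemma~\ref{lem:TS}, together with the set $S := \{s_1, \ldots, s_{m-1}\}$ of run-boundary positions.

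\emph{Parity of $p$.} Suppose $p$ is even. If $p/2 \le (n-3)/2$, Lemma~\ref{lem:TS}~(\ref{itm:doubling}) at $k = p/2$ gives $A(p/2) A(p/2 + 1) = A(p) A(p+1) = -1$, whereas positions $p/2$ and $p/2 + 1$ both lie in the first run, so this product equals $+1$. Otherwise $p \ge n - 2$; combined with $s_{m-1} = n - 1$ and the parities of $p$ (even) and $n$ (odd), the only possibility is $p = n - 1$, forcing $m = 2$ and $n = 3$, which is ruled out by $n > 5$.

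\emph{Parity of $q$.} Now $p$ is odd, so $p \ge 3$. Suppose $q$ is even. If $q \le n - 3$, Lemma~\ref{lem:TS}~(\ref{itm:doubling}) at $k = q/2$ places $q/2$ in $S$, so $q/2 = s_j$ for some $j < e$. Then $p \mid s_j$, and because $p$ is odd we obtain $p \mid q$, contradicting $p \nmid q$. Otherwise $q \ge n - 2$, and parity forces $q = n - 1 = s_{m-1}$ and $e = m - 1$. Then $s_1, \ldots, s_{m-2}$ are $m - 2$ distinct positive multiples of $p$, whence $s_{m-2} \ge (m-2)p$; combined with $s_{m-2} \le n - 2$ this gives $p \le 2(n-2)/(n-3) \le 5/2$ for $n \ge 7$, incompatible with $p \ge 3$. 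The same counting also rules out $e = m$ (it would force $p \le 2$), so $e \le m - 1$ and $q \le n - 1$.

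\emph{The bound $n \ge 2q - 3$.} The idea is to describe the last $q$ entries of $A$ using Lemma~\ref{lem:TS}~(\ref{itm:ss}). Within a single run of $A$ in $[1, q]$ the identity $A(n - k + 1) = (-1)^{(n+1)/2 + k} A(k)$ forces $A(n - k + 1)$ to alternate in sign with $k$; across each of the $e - 1$ internal run boundaries of $[1, q]$ a short computation shows the two adjacent image entries agree in sign. Hence $[n - q + 1, n]$ splits into $e$ alternating blocks of lengths $\ell_1, \ldots, \ell_e$ joined by $e - 1$ sign-preserving merges, and every maximal constant-sign run inside this segment has length $1$ (inside a block) or $2$ (across a merge). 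A longer cascaded run would require an internal block of length $1$ with merges on both sides, but $\ell_j \ge p \ge 3$ for every $j \le e - 1$, and the one block that might be shorter, corresponding to run $e$, is the leftmost and hence has no left-hand merge.

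Finally, count the restricted runs in two ways. The block/merge description gives $\sum_{j=1}^e \ell_j - (e - 1) = q - e + 1$. On the other hand, if $j^*$ is the smallest index with $s_{j^*} \ge n - q + 1$, then the runs of $A$ meeting the tail segment are exactly $j^*, j^* + 1, \ldots, m$, supplying $m - j^* + 1$ restricted runs; hence $j^* = m - q + e$. If $j^* \le e - 2$, the run indexed $j^* + 1 \le e - 1$ lies entirely inside $[n - q + 1, n]$ and has length a positive multiple of $p \ge 3$, contradicting the length-$\le 2$ bound. Therefore $j^* \ge e - 1$, which rearranges to $q \le m + 1 = (n + 3)/2$, i.e., $n \ge 2q - 3$. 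The delicate point is the no-cascade claim of the previous paragraph; once that is verified, the counting is routine.
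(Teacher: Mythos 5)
Your proof is correct, and although it rests on the same two ingredients as the paper --- the doubling identity of Lemma~\ref{lem:TS}~(\ref{itm:doubling}) for the two parity claims, and the alternating tail structure from Lemma~\ref{lem:TS}~(\ref{itm:ss}) for the bound --- your route to $n\ge 2s_e-3$ is genuinely different. The paper asserts directly that the initial constant blocks and the corresponding final alternating blocks can overlap in at most $3$ positions, and leaves that verification terse; you instead double count the maximal constant stretches of the tail $[n-s_e+1,n]$: adjacent equal pairs occur exactly at the $e-1$ junction images (so there are $s_e-e+1$ stretches), these stretches biject with the runs of $A$ meeting the tail (giving $j^*=m-s_e+e$), and since any run of index at most $e-1$ has length a positive multiple of $s_1\ge 3$ while a run fully contained in the tail is a stretch of length at most $2$, you get $j^*\ge e-1$, i.e.\ $s_e\le m+1$, which is the desired bound. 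Your no-cascade observation --- a constant stretch of length $3$ would need an internal block of length $1$, impossible because internal blocks have length $\ge s_1\ge 3$ and the one possibly short block, the image of run $e$, sits at the left edge with no left-hand merge --- is exactly the delicate point, and it in effect makes rigorous what the paper's ``overlap at most $3$'' glosses over. The price of your ordering (both parities before the bound) is extra edge-case work that the paper's ordering avoids: the paper first shows $n\ge 2s_1-1$ and then $n\ge 2s_e-3$, which automatically places $s_1/2$ and $s_e/2$ in the admissible range $k\le (n-3)/2$ of the doubling identity, whereas you must separately dispose of $s_1=n-1$ and of $s_e=n-1$ (and $e=m$) by counting multiples of $s_1$; your counting there is correct, since $(m-2)s_1\le n-2$ forces $s_1\le 2(n-2)/(n-3)<3$ for $n\ge 7$, contradicting $s_1\ge 3$. (One small remark: in the even-$s_e$ case, $s_1\mid s_e/2$ already implies $s_1\mid s_e$; the oddness of $s_1$ is not needed at that step.)
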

\begin{proof}
From Lemma~\ref{lem:TS}~(\ref{itm:ss}), we find that $A$ must end with $s_1$ alternating entries, which implies that $n\ge 2s_1-1$. Since $n>5$, we then conclude that, if $s_1$ were even, then Lemma~\ref{lem:TS}~(\ref{itm:doubling}) gives
\[
A(s_1/2)A(s_1/2+1)=A(s_1)A(s_1+1),
\]
which contradicts~\eqref{eqn:def_s}. Hence $s_1$ is odd and so $s_1\ge 3$.
\par
For the lower bound for $n$, recall that $A$ starts with $e$ blocks of equal entries, where the first $e-1$ blocks have length at least $s_1\ge 3$. By Lemma~\ref{lem:TS}~(\ref{itm:ss}), $A$ must end with $e$ blocks of alternating elements whose lengths match those of the corresponding initial locks. The overlap between the initial blocks and the corresponding final blocks can be at most $3$. Hence $n\ge 2s_e-3$.
\par
It remains to show that $s_e$ is odd. We know that $s_e\le (n+3)/2$. Hence, if $s_e$ were even, then since $n>5$, we find from Lemma~\ref{lem:TS}~(\ref{itm:doubling}) that
\[
A(s_e/2)A(s_e/2+1)=A(s_e)A(s_e+1),
\]
which again contradicts~\eqref{eqn:def_s} since $s_1$ does not divide $s_e$.
\end{proof}
\par
Our key result is the following lemma.
\begin{lemma}
\label{lem:lb_n}
Suppose that $A$ is a Barker sequence of odd length $n$ and that $s_1>1$. Then $n\le s_1+s_e+1$.
\end{lemma}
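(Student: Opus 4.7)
The plan is to derive a contradiction from $n\ge p+q+2$, where $p=s_1$ and $q=s_e$, by showing $\abs{C(n-p-q+1)-C(n-p-q-1)}\ge 4$. Write $v=p+q$. By Lemma~\ref{lem:ub_n}, both $p$ and $q$ are odd, so $v$ is even and the shifts $n-v\pm 1$ are even integers in $(0,n)$; the Barker property therefore gives $\abs{C(n-v+1)-C(n-v-1)}\le 2$. Applying Lemma~\ref{lem:TS}(\ref{itm:ss}) to substitute for $A(k+n-v\pm 1)$ in terms of $A$-values with small indices, this reduces to showing $\abs{T_1-T_2}\ge 4$, where
\[
T_1=\sum_{j=1}^{v-1}(-1)^jA(j)A(v-j),\qquad T_2=\sum_{j=1}^{v+1}(-1)^jA(j)A(v+2-j).
\]

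I would split each $T_i$ according to whether both index positions lie in the ``known'' range $[1,q]$, on which $A$ is pinned down by the run structure, or outside it. The reflective symmetry of each summand pairs the two outside portions of each $T_i$ and reduces them, with $A(1)$ factored out from first-run entries, to sums of the form $\sum_{k}(-1)^kA(k)$ over a range in the ``unknown'' region. The outside part of $T_1-T_2$ then telescopes, and using $A(q+1)=(-1)^eA(1)$ (because $q+1$ starts the run following the $e$-th), it evaluates to $4(-1)^e+2A(1)\bigl(A(v+1)-A(v)\bigr)$.

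For the inside part I would encode the first $q$ entries by $P(x)=\sum_{k=1}^qA(k)x^k$. Since $s_1\mid s_i$ for $i\le e-1$, direct expansion gives $(x-1)P(x)=A(1)\,xR(x)$, where
\[
R(x)=(-1)^{e-1}x^q-1+2\sum_{i=1}^{e-1}(-1)^{i-1}x^{s_i},
\]
and so $P(x)P(-x)=x^2R(x)R(-x)/(x^2-1)$. A short manipulation then shows that the inside contribution to $T_1-T_2$ equals $[x^v]R(x)R(-x)$. The exponents of $R(x)$ lie in $\{0,s_1,\dots,s_{e-1},q\}$; since $s_1,\dots,s_{e-1}$ are multiples of $p$ while $v=p+q$ is not (because $p\nmid q$), the only pairs of exponents summing to $v$ are $\{s_1,q\}$ and $\{q,s_1\}$. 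Each contributes $2(-1)^e$ to $[x^v]R(x)R(-x)$ (using that $p$ and $q$ are odd), so the inside part equals $4(-1)^e$.

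Combining, $T_1-T_2=8(-1)^e+2A(1)\bigl(A(v+1)-A(v)\bigr)\in\{\pm 4,\pm 8,\pm 12\}$, so $\abs{T_1-T_2}\ge 4$, yielding the desired contradiction. The main obstacle is the inside computation: setting up the polynomial $R(x)$, verifying the identity $P(x)P(-x)=x^2R(x)R(-x)/(x^2-1)$, and then using $p\mid s_i$ for $i<e$ together with $p\nmid q$ to isolate the relevant exponent pairs; the outside telescoping and the identity $A(q+1)=(-1)^eA(1)$ are routine by comparison.
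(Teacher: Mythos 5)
Your proposal is correct, and its skeleton coincides with the paper's: assume for contradiction that $n\ge s_1+s_e+2$ with $v=s_1+s_e$ even (parity then forces $n\ge v+3$, hence $n>5$, so Lemma~\ref{lem:ub_n} indeed applies), fold the correlations at the even shifts $n-v\pm1$ through Lemma~\ref{lem:TS}~(\ref{itm:ss}) into your $T_1,T_2$, and arrive at the identical final value $T_1-T_2=8(-1)^e+2A(1)\bigl(A(v+1)-A(v)\bigr)$, whose magnitude is at least $4$, contradicting $\abs{C(n-v+1)-C(n-v-1)}\le 2$. Where you genuinely differ is in evaluating $T_1-T_2$. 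The paper splits the folded sums run by run and telescopes the run-sums $S_j$; the divisibility $s_1\mid s_j$ for $j<e$ kills $S_j$ for $1<j<e-1$, and the boundary sums $S_0$, $S_1$, $S_{e-1}$ (with the special case $S_1=-4$ when $e=2$) plus the tail $R$ supply the constants. You instead split by whether both indices lie in the known prefix $[1,s_e]$, dispose of the outside part by the reflections $j\mapsto v-j$ and $j\mapsto v+2-j$ followed by telescoping, and compute the inside part as $[x^v]R(x)R(-x)$ via $(x-1)P(x)=A(1)xR(x)$. I checked the pieces: the identity $P(x)P(-x)=x^2R(x)R(-x)/(x^2-1)$; the reduction of the inside difference to $[x^v]R(x)R(-x)$ (extract the coefficient of $x^{v+2}$ in $(x^2-1)P(x)P(-x)$); the exponent analysis (every exponent of $R$ except $s_e$ is a multiple of $s_1$ while $s_1\nmid v$, so only the pairs $\{s_1,s_e\}$ survive, each contributing $2(-1)^e$ since $s_1$ and $s_e$ are odd); and the outside telescoping, including the boundary term $A(s_1+1)=-A(1)$ at $j=s_1+1$ in $T_2$ — all yield your claimed values $4(-1)^e$ and $4(-1)^e+2A(1)\bigl(A(v+1)-A(v)\bigr)$. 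As for what each route buys: the paper's run-indexed telescoping is shorter and entirely elementary, whereas your generating-function version is more uniform (no separate $e=2$ case) and makes the role of the divisibility hypothesis — $s_1\mid s_j$ for $j<e$ versus $s_1\nmid s_e$ — completely transparent as a one-line statement about the exponents of $R$. One small remark: the identity $(x-1)P(x)=A(1)xR(x)$ holds for any run structure, as it merely records the run boundaries; the divisibility is needed only later, in the coefficient extraction, not where you invoke it.
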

\begin{proof}
Since $e>1$ and $s_1>1$, the lemma holds for $n\le 5$, so assume that $n>5$. From Lemma~\ref{lem:ub_n} we know that $s_1$ and $s_e$ are odd. Write $v=s_1+s_e$, so that $v$ is even, and suppose for a contradiction that $n\ge v+3$.
\par
In what follows, we make repeated use of~\eqref{eqn:def_s} without explicit reference. Without loss of generality, we can assume that $A(1)=1$. Let $u$ be an even integer satisfying $s_e+1\le u \le n-1$. From Lemma~\ref{lem:TS}~(\ref{itm:ss}) we find that
\[
C(n-u+1)=\sum_{k=1}^{u-1}A(k)A(u-k)(-1)^{\frac{n+1}{2}+k},
\]
which we can rewrite as
\[
(-1)^{\frac{n+1}{2}}\,C(n-u+1)=\sum_{j=0}^{e-1}(-1)^j\!\!\!\sum_{k=s_j+1}^{s_{j+1}}\!\!\!A(u-k)(-1)^k+\!\!\!\sum_{k=s_e+1}^{u-1}\!\!\!A(k)A(u-k)(-1)^k.
\]
Since $s_e+2\le v\le n-3$ by assumption, we can apply this identity with $u=v$ and $u=v+2$ to obtain
\begin{equation}
(-1)^{\frac{n+1}{2}}(C(n-v+1)-C(n-v-1))=\sum_{j=0}^{e-1}(-1)^jS_j+R-A(v)+A(v+1),   \label{eqn:diff_C_sums}
\end{equation}
where
\[
S_j=\sum_{k=s_j+1}^{s_{j+1}}(-1)^k(A(v-k)-A(v-k+2))
\]
for $0\le j\le e-1$ and
\[
R=\sum_{k=s_e+1}^{v-1}(-1)^kA(k)(A(v-k)-A(v-k+2)).
\]
Since $v\ge s_e+2$, the sum $R$ is nonempty. However only the first summand in~$R$ is nonzero. Hence, since $s_e$ is odd,
\[
R=A(s_e+1)(A(s_1-1)-A(s_1+1))=2(-1)^e.
\]
The sum $S_j$ is telescoping, so we can rewrite $S_j$ as
\[
S_j=(-1)^{s_{j+1}}(A(v-s_{j+1})-A(v-s_{j+1}+1))-(-1)^{s_j}(A(v-s_j)-A(v-s_j+1)),
\]
from which we find that
\[
S_j=0\quad\text{for $1<j<e-1$}
\]
since, by definition, $v-s_j$ and $v-s_{j+1}$ are not divisible by $s_1$ for $1<j<e-1$. Moreover, we obtain
\[
S_0=2(-1)^e-A(v)+A(v+1)
\]
and
\[
\text{$S_1=-2(-1)^e$ and $S_{e-1}=-2$ for $e>2$}.
\]
For $e=2$, we have $S_1=-4$. Substitute everything into~\eqref{eqn:diff_C_sums} to give
\[
(-1)^{\frac{n+1}{2}}(C(n-v+1)-C(n-v-1)=8(-1)^e-2A(v)+2A(v+1),
\]
which contradicts the Barker property of $A$. Therefore $n\le v+1$.
\end{proof}
We now complete our proof of the theorem. We know that $A_3$ and $A_5$ are Barker sequences of length $3$ and $5$, respectively, so assume that $n>5$. As mentioned earlier, we can assume without loss of generality that $A(1)=A(2)=1$, so that $s_1>1$. From Lemmas~\ref{lem:ub_n} and~\ref{lem:lb_n} we then find that
\begin{equation}
2s_e-3\le n\le s_1+s_e+1,   \label{eqn:bounds_n}
\end{equation}
which implies $s_e\le s_1+4$. From Lemma~\ref{lem:ub_n} we know that $s_1$ and $s_e$ are odd and that $s_1\ge 3$. Since $s_1\ge 3$, we have $e\in\{2,3\}$ and, since $s_e-s_1$ is even, there are only the following three cases to consider.
\par
{\itshape Case 1: $e=3$.} This case forces $(s_1,s_2,s_3)=(3,6,7)$, so $n=11$ by~\eqref{eqn:bounds_n} and the corresponding sequence is $A_{11}$.
\par
{\itshape Case 2: $e=2$ and $s_2=s_1+2$.} Here~\eqref{eqn:bounds_n} implies that $n$ equals either $2s_1+1$ or $2s_1+3$. Hence, we find from Lemma~\ref{lem:TS}~(\ref{itm:doubling}) that
\[
1=A(s_1-1)A(s_1)=A(2s_1-2)A(2s_1-1).
\]
If $n=2s_1+1$, then $A(n-2)=A(n-3)$, which forces $s_1=3$ by Lemma~\ref{lem:TS}~(\ref{itm:ss}). Therefore, in this case we have $n=7$ and $(s_1,s_2)=(3,5)$ and the corresponding sequence is $A_7$. If $n=2s_1+3$, then $A(n-4)=A(n-5)$, which implies that $s_1=3$ or $5$. In the first case we obtain $n=9$ and $(s_1,s_2)=(3,5)$. But then Lemma~\ref{lem:TS}~(\ref{itm:ss}) and~(\ref{itm:doubling}) imply $A(6)=A(7)=1$ and $A(6)A(7)=-1$, respectively, a contradiction. In the second case we obtain $n=13$ and $(s_1,s_2)=(5,7)$ and the corresponding sequence is $A_{13}$.
\par
{\itshape Case 3: $e=2$ and $s_2=s_1+4$.} In this case we have $n=2s_1+5$ by~\eqref{eqn:bounds_n}. Since, by Lemma~\ref{lem:TS}~(\ref{itm:ss}), $A$ must end with a block of $s_1$ alternating elements preceded by a block of four alternating elements, we have $n\ge 2s_2-1$. Hence $n\ge 2s_1+7$, a contradiction.



\end{document}